\newcommand{\ba}{\backslash}
\newcommand{\dash}{\nobreakdash-\hspace{0pt}}
\newcommand{\AG}[1]{\ensuremath{\mathrm{AG}(#1)}}
\newcommand{\si}[1]{\ensuremath{\operatorname{si}(#1)}}
\newcommand{\x}{$\bullet$}
\newcommand{\cat}[2]{\ensuremath{#1
\hspace{2.5pt plus 0.5pt minus 0.5pt}
\raisebox{-0.5pt}{\rotatebox{90}{$\Bowtie$}}
\hspace{2.5pt plus 0.5pt minus 0.5pt}#2}}
\theoremstyle{plain}
\newtheorem{theorem}{Theorem}[section]
\newtheorem{lemma}[theorem]{Lemma}
\newtheorem{corollary}[theorem]{Corollary}
\title[Binary matroids with no $M(K_{5}\ba e)$\dash minor]
{The internally $4$\protect\dash connected binary matroids with no
$M(K_{5}\ba e)$\dash minor}
\author{Dillon Mayhew\and Gordon Royle}
\begin{document}

\begin{abstract}
Let \cat{\AG{3,2}}{U_{1,1}} denote the binary matroid
obtained from $\AG{3,2}\oplus U_{1,1}$ by completing the
$3$\dash point lines between every element in $\AG{3,2}$ and
the element of $U_{1,1}$.
We prove that every internally $4$\dash connected binary matroid
that does not have a minor isomorphic to $M(K_{5}\ba e)$ is
isomorphic to a minor of $(\cat{\AG{3,2}}{U_{1,1}})^{*}$.
\end{abstract}

\maketitle

\section{Introduction}

The study of minor-closed classes of graphs has a long history, starting with Wagner's characterization~\cite{Wag37} of planar graphs as those graphs with no $K_5$ or $K_{3,3}$ minor, and Hall's study~\cite{Hal43}
of the minor-closed class that arises by
excluding $K_{3,3}$. 

Numerous authors have studied minor-closed classes of graphs defined by some natural property, such as embeddability on a particular surface, aiming to determine the excluded minors for that class of graphs; the standard example (other than planarity) being the determination of the 35 excluded minors for the class of projective planar graphs (Archdeacon~\cite{Arc81}).  Other authors have studied the ``dual problem'' of finding a structural description of the minor-closed classes of graphs obtained by excluding a particular minor or set of minors. Examples of classes investigated in this way include those obtained by excluding $K_5\backslash e$, $K_5$, $K_{3,3}$, $V_8$, the cube, or the octahedron (see \cite{KM07} and the references therein).

Analogous questions arise in the study of matroids, particularly binary matroids, and a number of excluded minor theorems and structural characterisations of classes of matroids defined by excluded minors are known. For example, Oxley has determined the binary matroids with no
$4$\dash wheel minor, the ternary matroids with no $M(K_4)$ minor, and the regular matroids with no 5-wheel minor, among others (see \cite{Oxl87b, Oxl87c, Oxl89}). 
For graphs, the Kuratowski graphs $K_{3,3}$ and $K_5$ are especially important, and for binary matroids, the Kuratowski graphs and their duals play an analogous role (see Kung~\cite{Kun87} for an early study of these classes). For graphs, decomposition theorems exist for all classes produced by excluding either or both of the Kuratowski graphs, but for binary matroids our knowledge is incomplete. 
There are $15$ classes of binary matroids obtained by
excluding some non-empty subset of
$$\{M(K_{3,3}),M(K_{5}),M^{*}(K_{3,3}),M^{*}(K_{5})\}.$$
Mayhew, Royle and Whittle \cite{MRW10}  determined the internally 4-connected binary matroids with no $M(K_{3,3})$-minor, and
this leads directly to characterisations (and associated decomposition theorems) for 12 of these classes~\cite{MRWb}. This subsumes 
earlier results of Qin and Zhou \cite{QZ04}, who described the binary matroids obtained by excluding all the cycle and bond matroids of the Kuratowski graphs.

Our failure to characterize the remaining three classes
is due to the fact that we do not have a decomposition
theorem for the class of binary matroids with no
$M(K_{5})$\dash minor.  Unfortunately, while characterizing the internally $4$\dash connected binary matroids
with no $M(K_{3,3})$\dash minor is difficult, and requires
a long and technical proof, it seems that it
will be even more challenging to characterize binary matroids with no $M(K_{5})$\dash minor.

%
%
%
%
%

In this article, we prove a partial result by determining the
internally $4$\dash connected binary matroids with no
$M(K_{5}\ba e)$\dash minor.
Our hope is that this will assist us in future
exploration, since we now know that an internally $4$\dash connected
binary matroid with no $M(K_{5})$\dash minor either has
an $M(K_{5}\ba e)$\dash minor, or is one of the finite number of
matroids we describe in this article.
As in the characterization of the binary matroids with no
$M(K_{3,3})$\dash minor, the proof involves
a great deal of case-checking, enough so that
completing the argument by hand is not feasible.
Instead, we use a computer verification for these portions of the proof; however we emphasize that the computations, which were independently performed by two different means, are only used to verify routine assertions about specific matroids.

Robertson \& Seymour \cite{RS84} considered the {\em graphs} with no
$M(K_{5}\ba e)$\dash minor, obtaining the result that a 3\dash connected graph with no  $M(K_{5}\ba e)$\dash minor is either a wheel, isomorphic to $K_{3,3}$, or isomorphic to the triangular prism $(K_5\ba e)^*$.
(Figure~\ref{fig4} shows the graphs $K_{5} \ba e$ and the triangular prism.)

For a number of reasons, we find it more convenient to
consider binary matroids without an $M^{*}(K_{5}\ba e)$\dash minor.
Therefore we will henceforth state all our results in terms
of binary matroids with no minor isomorphic to the cycle
matroid of the triangular prism.
(We abbreviate this to ``no prism-minor" or ``prism-free''.)

\begin{figure}[htb]
\includegraphics{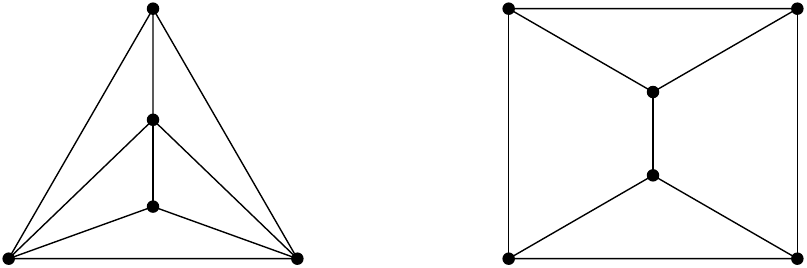}
\caption{$K_{5}\ba e$, and its geometric dual, the triangular prism.}
\label{fig4}
\end{figure}

Suppose that $M_{1}$ and $M_{2}$ are binary matroids on disjoint
ground sets.
Let $S$ be a set of cardinality
$|E(M_{1})|\times|E(M_{2})|$, disjoint from $E(M_{1})\cup E(M_{2})$,
where every element $e_{a,b}\in S$ corresponds to a
pair $(a,b)\in E(M_{1})\times E(M_{2})$.
Then \cat{M_{1}}{M_{2}} is the unique binary matroid
on the ground set $E(M_{1})\cup E(M_{2})\cup S$
satisfying
\[
(\cat{M_{1}}{M_{2}})|(E(M_{1})\cup E(M_{2})) = M_{1}\oplus M_{2}
\]
such that $\{a,b,e_{a,b}\}$ is a circuit, for every
$(a,b)\in E(M_{1})\times E(M_{2})$.
That is, $\cat{M_{1}}{M_{2}}$ is obtained from $M_{1}\oplus M_{2}$
by placing $e_{a,b}$ on the line between $a$ and $b$, for
every pair $(a,b)$ in $E(M_{1})\times E(M_{2})$.

\begin{theorem}
\label{1}
Let $M$ be a binary matroid with no prism-minor.
\begin{enumerate}[label={\rm (\roman*)}]
\item If $M$ is internally $4$\dash connected, then
$M$ has rank at most~$5$, and is isomorphic to a minor of
$\cat{\AG{3,2}}{U_{1,1}}$.
\item If $M$ is $3$\dash connected but not
internally $4$\dash connected, and $M$ has an internally
$4$\dash connected minor with at least~$6$ elements that
is not isomorphic to $M(K_{4})$, $F_{7}$, $F_{7}^{*}$,
or $M(K_{3,3})$, then $M$ is isomorphic to 
one of the sporadic matroids $\mathbf{S1}$, $\mathbf{S2}$,
$\mathbf{S3}$, $\mathbf{S4}$, or $\mathbf{S5}$,
defined in Table~{\rm \ref{tab:sporadic}} of
Section~{\rm \ref{sporadic}}.
\end{enumerate} 
\end{theorem}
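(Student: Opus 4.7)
The plan is to proceed by induction, using a splitter-type theorem for internally $4$-connected binary matroids to reduce the problem to a finite, bounded sequence of single-element extensions and coextensions. Because the authors explicitly note that computer verification is used for routine assertions about specific matroids, the strategy is to reduce the proof to a finite enumeration and carry that enumeration out by machine. The argument splits naturally according to which ``anchor'' internally $4$-connected minor $N$ of $M$ is present.

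For Part (i), I would first verify directly that $\cat{\AG{3,2}}{U_{1,1}}$ has no prism-minor, and then fix a convenient anchor such as $M(K_{4})$, $F_{7}$, $F_{7}^{*}$, or $M(K_{3,3})$. Any internally $4$-connected binary prism-free $M$ not equal to the anchor $N$ must contain $N$ as a proper minor, and a splitter-type theorem for internal $4$-connectivity lets one produce $M$ from $N$ by a sequence of internally $4$-connected intermediates obtained via bounded single-element extensions and coextensions. At each step one enumerates the binary single-element extensions (equivalently, columns over $\mathrm{GF}(2)$) that remain internally $4$-connected and prism-free. Every matroid produced must be a minor of $\cat{\AG{3,2}}{U_{1,1}}$, a rank-$5$ matroid, so the enumeration terminates; termination simultaneously yields the rank bound and the embedding claim.

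For Part (ii), the hypothesis provides an internally $4$-connected minor $N$ of $M$ with at least six elements, and $N$ is not $M(K_{4})$, $F_{7}$, $F_{7}^{*}$, or $M(K_{3,3})$; Part (i) then locates $N$ in the explicit finite list of minors of $\cat{\AG{3,2}}{U_{1,1}}$. Since $M$ itself is $3$-connected but not internally $4$-connected, it has a non-trivial $3$-separation $(A,B)$ with $\min(|A|,|B|)\ge 4$. I would analyze how $N$ sits across $(A,B)$, enumerate the possible ways to reconstruct $M$ from $N$ across such a $3$-separation while preserving binarity, $3$-connectivity, and prism-freeness, and read off the five sporadic matroids $\mathbf{S1}$--$\mathbf{S5}$ as the complete output.

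The principal obstacle is case explosion. Even with the strong constraints of binarity, internal $4$-connectivity, and prism-freeness, the extension tree has many branches, each of which requires checking that no newly created rank-$3$ minor is $M^{*}(K_{5}\ba e)$. The splitter lemma governing internal $4$-connectivity is itself delicate, because one may be forced to pass through intermediate matroids that are only $3$-connected, and the bookkeeping needed to ensure that the computer verification is both trustworthy and independently reproducible (the authors performed it twice, by independent means) is where the real work lies, rather than in any single clever insight.
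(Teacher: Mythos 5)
Your overall strategy---reduce to a bounded number of single-element extensions/coextensions from known internally $4$\dash connected minors and finish by machine enumeration---is the same as the paper's, but as written it has two genuine gaps. First, every chain theorem for internally $4$\dash connected binary matroids comes with infinite exceptional families, and your reduction to a finite check is not valid until these are disposed of. The theorem the paper uses (Chun--Mayhew--Oxley) guarantees an internally $4$\dash connected minor $N$ with $|E(M)|-|E(N)|\le 3$ \emph{except} when $M$ or $M^{*}$ is the cycle matroid of a planar or M\"obius quartic ladder or of the terrahawk; the paper must (and does, in its Lemma~\ref{3}) show separately that all of these, apart from $M(K_{5})$ itself, contain a prism minor. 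Without that step there could be arbitrarily large internally $4$\dash connected prism-free matroids in those families and no finite enumeration would settle part~(i). Relatedly, no theorem lets you keep the \emph{intermediate} matroids internally $4$\dash connected one element at a time: the paper only gets $3$\dash connected intermediates, via Seymour's Splitter Theorem, and even that requires a special argument for the wheel case (extending $M(\mathcal{W}_{4})$ forces $M^{*}(K_{3,3})$ or $M(K_{3,3})$), together with excluding $M(K_{5})$, $M(K_{3,3})$, $M^{*}(K_{3,3})$ from the statement of that lemma. You acknowledge the $3$\dash connected intermediates as an ``obstacle,'' but the exceptional families are not an obstacle to be managed---they are a missing case that must be argued.

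Second, your plan for part~(ii)---analyse how $N$ lies across a non-trivial $3$\dash separation of $M$ and ``enumerate the ways to reconstruct $M$''---is not obviously a finite task: $3$\dash connected, non-internally-$4$\dash connected prism-free matroids can be arbitrarily large (they are exactly what Corollary~\ref{cor1} builds by iterated $3$\dash sums), so a $3$\dash separation analysis alone does not bound $|E(M)|$. The paper instead takes a minimal counterexample, applies the Splitter Theorem to get a chain $M_{0},\dots,M_{t}=M$ of $3$\dash connected matroids, and proves $t\le 3$ by observing that $M_{t-1}$ is either internally $4$\dash connected or one of the five sporadic matroids, each of which is within two elements of a suitable internally $4$\dash connected minor. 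That bound on $t$ is the step that makes the computation for part~(ii) finite, and your proposal does not supply a substitute for it.
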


Theorem~\ref{1} has a
peculiar consequence (Corollary~\ref{cor1}).
Apart from a finite number
of exceptions, every $3$\dash connected
binary matroid with no prism-minor can be constructed using
$3$\dash sums starting from copies of only two matroids:
$M(K_{4})$ and $F_{7}$.

Recall that a \emph{parallel extension} of the matroid $M$
is a matroid $M'$ with an element $e\in E(M')$ such that
$M'\ba e=M$, and $e$ is in a parallel pair of $M'$.
A \emph{cycle} of a binary matroid is a (possibly empty)
disjoint union of circuits.
If $M_{1}$ and $M_{2}$ are binary matroids such that
(i)~$|E(M_{1})|,|E(M_{2})| \geq 7$,
(ii)~$E(M_{1})\cap E(M_{2})=T$, where $T$ is a triangle
of both $M_{1}$ and $M_{2}$,
(iii)~$T$ does not contain a cocircuit in either
$M_{1}$ or $M_{2}$,
then the \emph{$3$\dash sum} of $M_{1}$ and $M_{2}$
(denoted $M_{1}\oplus_{3} M_{2}$)
is defined.
It is a binary matroid on the set $(E(M_{1})\cup E(M_{2}))-T$,
and the cycles of $M_{1}\oplus_{3} M_{2}$ are exactly
the sets of the form $(Z_{1}-Z_{2})\cup (Z_{2}-Z_{1})$,
where $Z_{i}$ is a cycle of $M_{i}$ for $i=1,2$, and
$Z_{1}\cap T=Z_{2}\cap T$.

\begin{corollary}
\label{cor1}
\begin{enumerate}[label={\rm (\roman*)}]
\item If $M$ is a $3$\dash connected binary matroid
with no prism-minor, then either
$M$ is an internally $4$\dash connected minor of
\cat{\AG{3,2}}{U_{1,1}}, or
$M$ is one of $\mathbf{S1}$, $\mathbf{S2}$,
$\mathbf{S3}$, $\mathbf{S4}$, or $\mathbf{S5}$,
or
$M$ can be constructed from copies of $M(K_{4})$ and $F_{7}$ using
parallel extensions and $3$\dash sums.
\item If $M$ is a $3$\dash connected binary matroid with no
$M(K_{5}\ba e)$\dash minor, then either
$M$ is an internally $4$\dash connected minor of
$(\cat{\AG{3,2}}{U_{1,1}})^{*}$, or
$M$ is one of
$\mathbf{S1}^{*}$, $\mathbf{S2}^{*}$,
$\mathbf{S3}^{*}$, $\mathbf{S4}^{*}$, or $\mathbf{S5}^{*}$, or
$M$ can be constructed from copies of $M(K_{4})$, $F_{7}$, and
$M^{*}(K_{3,3})$ using parallel extensions and $3$\dash sums.
\end{enumerate}
\end{corollary}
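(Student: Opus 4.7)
My plan is to derive Corollary~\ref{cor1} from Theorem~\ref{1} by induction on $|E(M)|$, combined with the standard $3$\dash sum decomposition of $3$\dash connected binary matroids that are not internally $4$\dash connected.

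For Part~(i), let $M$ be a $3$\dash connected binary matroid with no prism-minor, and assume inductively that the conclusion holds for all strictly smaller such matroids. If $M$ is internally $4$\dash connected, Theorem~\ref{1}(i) places $M$ in the first alternative. Otherwise $M$ has a $3$\dash separation $(X,Y)$ with $|X|,|Y|\geq 4$. If $M$ additionally has an internally $4$\dash connected minor on at least six elements not isomorphic to $M(K_{4})$, $F_{7}$, $F_{7}^{*}$, or $M(K_{3,3})$, then Theorem~\ref{1}(ii) yields $M\in\{\mathbf{S1},\mathbf{S2},\mathbf{S3},\mathbf{S4},\mathbf{S5}\}$. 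Otherwise, every such internally $4$\dash connected minor lies in $\{M(K_{4}),F_{7},F_{7}^{*},M(K_{3,3})\}$, and I invoke the classical fact that $M$ can then be written as a triangle $3$\dash sum $M=M_{1}\oplus_{3}M_{2}$ of strictly smaller $3$\dash connected binary matroids $M_{1},M_{2}$, each obtainable from a minor of $M$ after a possible parallel extension. Both $M_{i}$ inherit the no-prism-minor property, so the inductive hypothesis applies to each.

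The crucial observation is that, among the four candidate internally $4$\dash connected pieces, $F_{7}^{*}$ and $M(K_{3,3})$ have no triangles at all---the former because $F_{7}$ has no triads, the latter because $K_{3,3}$ is bipartite---so neither can appear as a component of a triangle $3$\dash sum. Consequently each $M_{i}$ must be one of $M(K_{4})$, $F_{7}$, an internally $4$\dash connected minor of $\cat{\AG{3,2}}{U_{1,1}}$ that contains a triangle, one of the sporadic $\mathbf{S1},\ldots,\mathbf{S5}$, or a matroid already constructible from $M(K_{4})$ and $F_{7}$. A finite case analysis---enumerating the triangles in each small exceptional matroid and checking that every non-trivial triangle $3$\dash sum against such a piece produces a prism-minor---rules out every internally $4$\dash connected component other than $M(K_{4})$ and $F_{7}$, as well as every sporadic component, and so exhibits $M$ as built from $M(K_{4})$ and $F_{7}$ by parallel extensions and $3$\dash sums.

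Part~(ii) is obtained from Part~(i) by duality, using that $M$ has no $M(K_{5}\ba e)$-minor iff $M^{*}$ is prism-free. The first two alternatives of Part~(i) applied to $M^{*}$ dualize directly to the first two of Part~(ii). For the third, the dual of ``built from $M(K_{4})$ and $F_{7}$ via parallel extensions and triangle $3$\dash sums'' is ``built from $M(K_{4})$ and $F_{7}^{*}$ via series extensions and triad $3$\dash sums''; re-expressing this via the primal operations named in the statement requires the additional building block $M^{*}(K_{3,3})$, which captures the triad $3$\dash sums involving $F_{7}^{*}$ (the latter having no triangle and hence no direct role as a triangle $3$\dash sum component). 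The main obstacle is the case analysis in Part~(i) that establishes $M(K_{4})$ and $F_{7}$ as the only internally $4$\dash connected components permitted in a triangle $3$\dash sum decomposition of a prism-free matroid, together with the corresponding reformulation in Part~(ii): both require enumerating triangles in specific finite matroids and verifying that the resulting $3$\dash sums create a forbidden minor, which is conceptually routine but computationally laborious---well-matched to the paper's overall strategy of computer verification for such assertions.
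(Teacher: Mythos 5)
Your skeleton---induction on $|E(M)|$, the $3$\dash sum decomposition of a $3$\dash connected but not internally $4$\dash connected binary matroid, and the observation that $F_{7}^{*}$ and $M(K_{3,3})$ are triangle-free and so cannot be a part of a triangle $3$\dash sum---matches the paper. The gap is in how you exclude the remaining candidate parts. You propose ``a finite case analysis \dots checking that every non-trivial triangle $3$\dash sum against such a piece produces a prism-minor.'' That claim is quantified over all possible partners $M_{2}$, of which there are infinitely many, so it is not a finite check as stated; you supply no lemma reducing it to one, you verify it for no piece, and it is far from obvious (a $3$\dash sum with an exceptional part need not itself acquire a prism-minor for any transparent reason---indeed some such sums could a priori be the sporadic matroids themselves). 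The step is also unnecessary, and you were one observation away from the paper's computation-free argument: by Seymour's (4.1) each part $M_{i}$ is (isomorphic to) a minor of $M$. Since the counterexample $M$ is not sporadic and not internally $4$\dash connected, Theorem~\ref{1}(ii) applied to $M$ itself already forces every internally $4$\dash connected minor of $M$ with at least six elements to lie in $\{M(K_{4}),F_{7},F_{7}^{*},M(K_{3,3})\}$; in particular this applies to $\si{M_{i}}$ when it is internally $4$\dash connected (one checks $|E(\si{M_{i}})|\geq 6$ from $r(M_{i})\geq 3$), and your triangle argument then leaves only $M(K_{4})$ and $F_{7}$. A sporadic $\si{M_{i}}$ is excluded the same way, via the certification in Section~\ref{sporadic} that each sporadic matroid contains an internally $4$\dash connected minor with at least six elements outside that list, which would again contradict Theorem~\ref{1}(ii) applied to $M$.

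Part~(ii) has a second problem: the dual of a triangle $3$\dash sum is a triad sum, not a triangle $3$\dash sum, so you cannot obtain (ii) by formally dualizing (i), and your sentence about ``re-expressing via the primal operations'' with $M^{*}(K_{3,3})$ ``capturing the triad $3$\dash sums involving $F_{7}^{*}$'' does not prove anything. The paper instead reruns the identical induction inside the class of matroids with no $M(K_{5}\ba e)$\dash minor: the small internally $4$\dash connected candidates are now $M(K_{4})$, $F_{7}$, $F_{7}^{*}$, and $M^{*}(K_{3,3})$, and only $F_{7}^{*}$ is triangle-free among them, which is exactly why $M^{*}(K_{3,3})$ survives as a third building block.
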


\begin{proof}
Suppose that $M$ is a counterexample to~(i),
chosen so that $|E(M)|$ is as small as possible.
Theorem~\ref{1}(i) means that $M$ is not
internally $4$\dash connected.
Therefore $M=M_{1}\oplus_{3} M_{2}$ for some matroids
$M_{1}$ and $M_{2}$ (see~\cite[(2.9)]{Sey80}).
Both $M_{1}$ and $M_{2}$ are minors of
$M$~\cite[(4.1)]{Sey80}, so neither $M_{1}$ nor
$M_{2}$ has a prism-minor.
Moreover, both $M_{1}$ and $M_{2}$ have fewer elements than $M$,
and \si{M_{1}} and \si{M_{2}} are both
$3$\dash connected~\cite[(4.3)]{Sey80},
so \si{M_{1}} and \si{M_{2}} satisfy
Corollary~\ref{cor1}(i).

Assume that \si{M_{1}} is one of
$\mathbf{S1}$, $\mathbf{S2}$,
$\mathbf{S3}$, $\mathbf{S4}$, or $\mathbf{S5}$.
In Section~\ref{sporadic} we certify that each of these
$5$ matroids contains an internally $4$\dash connected minor
$N$ such that $|E(N)|\geq 6$ and $N$ is not isomorphic
to $M(K_{4})$, $F_{7}$,
$F_{7}^{*}$, or $M(K_{3,3})$.
Thus \si{M_{1}}, and hence $M$, contains
an internally $4$\dash connected minor other
than $M(K_{4})$, $F_{7}$, $F_{7}^{*}$, or $M(K_{3,3})$.
This is a contradiction to Theorem~\ref{1}(ii), as $M$ is not
one of
$\mathbf{S1}$, $\mathbf{S2}$,
$\mathbf{S3}$, $\mathbf{S4}$, or $\mathbf{S5}$.
Therefore \si{M_{1}} is not one of the $5$ sporadic matroids.

Assume that \si{M_{1}} is internally $4$\dash connected.
As $M_{1}$ is a term in a $3$\dash sum, it contains a
triangle $T$, and $T$ does not contain a cocircuit of
$M_{1}$.
This means that $r(M_{1})\geq 3$.
Since \si{M_{1}} is $3$\dash connected,
it follows that \si{M_{1}} has at least~$6$ elements.
Now $M$ contains \si{M_{1}} as a minor, and $M$ is not
one of the sporadic matroids.
Theorem~\ref{1}(ii) implies that \si{M_{1}} is
isomorphic to $M(K_{4})$, $F_{7}$, $F_{7}^{*}$, or $M(K_{3,3})$.
But $M_{1}$ contains a triangle,
so \si{M_{1}} is not isomorphic to $F_{7}^{*}$ or $M(K_{3,3})$.
Hence \si{M_{1}} is isomorphic to $M(K_{4})$ or
$F_{7}$.

On the other hand, if \si{M_{1}} is not internally $4$\dash connected,
then by the inductive hypothesis, $M_{1}$ can be constructed from
copies of $M(K_{4})$ and $F_{7}$ using
parallel extensions and $3$\dash sums.
Thus, in either case, $M_{1}$
(and $M_{2}$, by an identical argument), can be
constructed from
copies of $M(K_{4})$ and $F_{7}$ using
parallel extensions and $3$\dash sums.
Therefore the same statement holds for $M$.
This contradiction completes the proof of~(i).
The proof of~(ii) is almost identical:
if $M$ is a minimal counterexample, then $M$ can be expressed as
$M_{1}\oplus_{3} M_{2}$, and if \si{M_{1}} is internally
$4$\dash connected, then it must be isomorphic to
$M(K_{4})$, $F_{7}$, $F_{7}^{*}$, or $M^{*}(K_{3,3})$.
But $\si{M_{1}}\ncong F_{7}^{*}$, as \si{M_{1}} must contain a
triangle.
\end{proof}

Our main theoretical tool in the proof of Theorem~\ref{1} is a
chain theorem for internally $4$\dash connected binary
matroids~\cite{CMO}. This theorem tells us that if a counterexample to Theorem~\ref{1}
exists, then it has at most three more elements than one of the known internally $4$\dash connected prism-free binary matroids, and therefore has rank at most~$8$.
Thus the proof of the theorem is reduced to a
(large) finite case analysis to demonstrate that none of the
internally $4$\dash connected prism-free binary matroids of rank up to 5 can
be extended and/or coextended by up to three elements to form a new internally $4$\dash connected prism-free binary matroid. This case analysis is performed by computer, but to increase
confidence in the correctness of the result, we conducted this analysis
in two totally independent ways.

The first search relies upon the \textsc{Macek} software
package by Petr Hlin\v{e}n\'{y} \cite{Hli04}. 
Macek has the facility to extend or coextend matroids while avoiding specified minors and using this it is relatively easy to verify that there are no new internally $4$\dash connected prism-free binary matroids within three extension/coextension steps of any of the known ones. The second technique involves the construction from first principles of a 
database of {\em all} simple prism-free binary matroids of rank up to 8.
Having constructed such a database, verifying that
Theorem~\ref{1} holds merely requires checking that none of the rank 6, 7 or 8 prism-free binary matroids are internally $4$\dash connected.  Both these computer searches are described in more detail in Section~\ref{compsearch}.

\section{Listing matroids}

In this section we explicitly list the 42 internally $4$\dash connected prism-free binary
matroids, and the five sporadic matroids that appear in the statement of Theorem~\ref{1}.

\subsection{The internally $4$-connected prism-free binary matroids}
\label{listsection}

Let $\mathcal{M}$ be the set of internally $4$\dash connected
minors of \cat{\AG{3,2}}{U_{1,1}} which, by Theorem~\ref{1}, is 
exactly the set of internally $4$\dash connected binary 
matroids with no prism-minors. 

There are 42 matroids in $\mathcal{M}$. 
It is easy to see that the only internally $4$\dash connected
binary matroids on at most $5$ elements are
$U_{0,0}$, $U_{0,1}$, $U_{1,1}$, $U_{1,2}$, $U_{1,3}$, and
$U_{2,3}$, so from this point
we content ourselves with listing the $36$ matroids in
$\mathcal{M}$ that have at least~$6$ elements.

The only rank\dash $3$ members
of $\mathcal{M}$ are $M(K_{4})$ and $F_{7}$,
which (for the sake of consistency), we will denote with
{\bf M1} and {\bf M2}, respectively.

Let $e$ be an element of \AG{3,2}.
It is an easy exercise to show that
contracting $e$ from \cat{\AG{3,2}}{U_{1,1}} and then
simplifying produces a rank\dash $4$ binary matroid with~$15$ elements,
which is therefore isomorphic to $\mathrm{PG}(3,2)$.
Hence $\mathcal{M}$ contains every rank\dash $4$
internally $4$\dash connected binary matroid.
Every such matroid can be obtained by deleting columns
from the matrix representing  $\mathrm{PG}(3,2)$ which is shown at the 
head of Table~\ref{tab2}.
Each row of the table corresponds to an internally $4$\dash connected
binary matroid with rank~$4$.
The empty entries in that row correspond to columns which
should be deleted to obtain a representation.
For example, {\bf M9} is represented by the matrix
\setcounter{MaxMatrixCols}{15}
$$
\begin{bmatrix}
1&0&&&1&1&1&0&0&&1&1&1&0&1\\
0&1&&&1&0&0&1&1&&1&1&0&1&1\\
0&0&&&0&1&0&1&0&&1&0&1&1&1\\
0&0&&&0&0&1&0&1&&0&1&1&1&1\\
\end{bmatrix},
$$
which has been obtained by deleting three particular columns from the matrix 
representing $\mathrm{PG}(3,2)$.

\begin{table}[htb]
\begin{center}
\begin{tabular}{|r|cccc|cccccc|cccc|c|l|}
\hline
&1&0&0&0&1&1&1&0&0&0&1&1&1&0&1&\\
&0&1&0&0&1&0&0&1&1&0&1&1&0&1&1&\\
&0&0&1&0&0&1&0&1&0&1&1&0&1&1&1&\\
&0&0&0&1&0&0&1&0&1&1&0&1&1&1&1&\\
\hline
{\bf M3}&&&&&\x&\x&\x&&&&\x&\x&\x&&\x& $F_7^*$\\
\hline
{\bf M4} &&&&&&\x&\x&\x&\x&&\x&\x&\x&\x&\x&$M^*(K_{3,3})$\\
\hline
{\bf M5}&&&&&\x&\x&\x&\x&\x&\x&\x&\x&\x&\x&&$M(K_5)$\\
{\bf M6}&&&&&\x&\x&\x&\x&\x&&\x&\x&\x&\x&\x&\\
\hline
{\bf M7}&&\x&&&\x&\x&\x&\x&\x&&\x&\x&\x&\x&\x&\\
{\bf M8}&&&&&\x&\x&\x&\x&\x&\x&\x&\x&\x&\x&\x&\\
\hline
{\bf M9}&\x&\x&&&\x&\x&\x&\x&\x&&\x&\x&\x&\x&\x&\\
{\bf M10}&\x&&&&\x&\x&\x&\x&\x&\x&\x&\x&\x&\x&\x&\\
\hline
{\bf M11}&\x&\x&&&\x&\x&\x&\x&\x&\x&\x&\x&\x&\x&\x&\\
\hline
{\bf M12}&\x&\x&\x&&\x&\x&\x&\x&\x&\x&\x&\x&\x&\x&\x&\\
\hline
{\bf M13}&\x&\x&\x&\x&\x&\x&\x&\x&\x&\x&\x&\x&\x&\x&\x&$\mathrm{PG}(3,2)$\\
\hline
\end{tabular}
\end{center}
\caption{Internally $4$\protect\dash connected rank\protect\dash $4$
restrictions of $\mathrm{PG}(3,2)$.}
\label{tab2}
\end{table}

We use a similar format to describe the rank\dash $5$ members of
$\mathcal{M}$.
Let $A$ be the matrix
$$
\begin{bmatrix}
0&1&1&1\\
1&0&1&1\\
1&1&0&1\\
1&1&1&0\\
\end{bmatrix}.
$$
Then $[I_{4}|A]$ is a $\mathrm{GF}(2)$\dash representation
of $\AG{3,2}$.
It follows that
$\cat{\AG{3,2}}{U_{1,1}}$ is represented over
$\mathrm{GF}(2)$ by the matrix
\begin{equation*}
\left[
\begin{array}{c|c|c|c|c}
\mathbf{0} & I_{4} & A & I_{4} & A\\[0.5ex]\hline
\rule{0ex}{3ex}1 & \mathbf{0}^{T} & \mathbf{0}^{T} & \mathbf{1}^{T} & \mathbf{1}^{T}\\
\end{array}
\right],
\end{equation*}
where $\mathbf{0}$ (respectively $\mathbf{1}$) is the
$4\times 1$ vector of all zeros (respectively ones).

A representation of any rank\dash $5$ member of $\mathcal{M}$
can be obtained by deleting columns from this matrix and each 
row of Table~\ref{tab1} corresponds to a rank\dash $5$ member of $\mathcal{M}$. We note here that {\bf M14} is $M(K_{3,3})$, and
{\bf M16} is the regular matroid $R_{10}$.

\begin{table}[htb]
\begin{center}
\begin{tabular}{|r|c|cccc|cccc|cccc|cccc|}
\hline
&0&1&0&0&0&0&1&1&1&1&0&0&0&0&1&1&1\\
&0&0&1&0&0&1&0&1&1&0&1&0&0&1&0&1&1\\
&0&0&0&1&0&1&1&0&1&0&0&1&0&1&1&0&1\\
&0&0&0&0&1&1&1&1&0&0&0&0&1&1&1&1&0\\
&1&0&0&0&0&0&0&0&0&1&1&1&1&1&1&1&1\\
\hline
{\bf M14}&&\x&\x&\x&\x&\x&\x&&&\x&\x&\x&&&&&\\
\hline
{\bf M15}&\x&\x&\x&\x&\x&\x&\x&&&\x&\x&&&&&\x&\\
{\bf M16}&&\x&\x&\x&\x&\x&\x&&&\x&\x&\x&&&&\x&\\
\hline
{\bf M17}&\x&\x&\x&\x&\x&\x&\x&\x&&\x&\x&&\x&&&&\\
 {\bf M18}&\x&\x&\x&\x&\x&\x&\x&&&\x&\x&\x&&&&\x&\\
{\bf M19}&&\x&\x&\x&\x&\x&\x&\x&&\x&\x&\x&\x&&&&\\
\hline
 {\bf M20}&\x&\x&\x&\x&\x&\x&\x&\x&&\x&\x&&\x&&&\x&\\
{\bf M21}&\x&\x&\x&\x&\x&\x&\x&\x&&\x&\x&&\x&&&&\x\\
{\bf M22}&\x&\x&\x&\x&\x&\x&\x&\x&&\x&\x&\x&\x&&&&\\
 {\bf M23}&&\x&\x&\x&\x&\x&\x&\x&\x&\x&\x&\x&\x&&&&\\
\hline
{\bf M24}&\x&\x&\x&\x&\x&\x&\x&\x&&\x&\x&&\x&&&\x&\x\\
{\bf M25}&\x&\x&\x&\x&\x&\x&\x&\x&\x&\x&\x&\x&\x&&&&\\
{\bf M26}&\x&\x&\x&\x&\x&\x&\x&\x&&\x&\x&\x&\x&\x&&&\\
{\bf M27}&&\x&\x&\x&\x&\x&\x&\x&\x&\x&\x&\x&\x&\x&&&\\
\hline
 {\bf M28}&\x&\x&\x&\x&\x&\x&\x&\x&\x&\x&\x&\x&\x&\x&&&\\
{\bf M29}&\x&\x&\x&\x&\x&\x&\x&\x&&\x&\x&\x&\x&\x&\x&&\\
{\bf M30}&&\x&\x&\x&\x&\x&\x&\x&\x&\x&\x&\x&\x&\x&\x&&\\
\hline
{\bf M31}&\x&\x&\x&\x&\x&\x&\x&\x&\x&\x&\x&\x&\x&\x&\x&&\\
{\bf M32}&\x&\x&\x&\x&\x&\x&\x&\x&&\x&\x&\x&\x&\x&\x&\x&\\
 {\bf M33}&&\x&\x&\x&\x&\x&\x&\x&\x&\x&\x&\x&\x&\x&\x&\x&\\
\hline
{\bf M34}&\x&\x&\x&\x&\x&\x&\x&\x&\x&\x&\x&\x&\x&\x&\x&\x&\\
 {\bf M35}&&\x&\x&\x&\x&\x&\x&\x&\x&\x&\x&\x&\x&\x&\x&\x&\x\\
\hline
 {\bf M36}&\x&\x&\x&\x&\x&\x&\x&\x&\x&\x&\x&\x&\x&\x&\x&\x&\x\\
\hline
\end{tabular}
\end{center}
\caption{Internally $4$\protect\dash connected rank\protect\dash $5$
restrictions of
\protect\cat{\AG{3,2}}{U_{1,1}}.}
\label{tab1}
\end{table}

\subsection{Sporadic matroids.}
\label{sporadic}
In this section we describe the $5$ sporadic matroids that
appear in the statement of Theorem~\ref{1}.
For the sake of brevity we represent an $n$\dash element binary matroid
$M$ with a string of numbers $m_{1},\ldots, m_{n}$ between $1$ and
$2^{r(M)}-1$.
A representation of $M$ over $\mathrm{GF}(2)$ can be obtained
by taking the binary representations of
$m_{1},\ldots, m_{n}$, each of which has $r(M)$ bits, and
taking these binary representations to be the columns of a
matrix $A$.
We use the convention that the least significant bit of
the binary representation will be in the bottom row
of $A$, and the most significant will be in the top row.
Thus the sequence
$$
1,2,3,4,6,8,9,12,
$$
corresponds to the following matrix:
$$A = 
\begin{bmatrix}
0&0&0&0&0&1&1&1\\
0&0&0&1&1&0&0&1\\
0&1&1&0&1&0&0&0\\
1&0&1&0&0&0&1&0\\
\end{bmatrix}.
$$
We also use the numbers
$m_{1},\ldots, m_{n}$ to represent the corresponding elements
in $E(M)$. With this notational convention, the $5$ sporadic matroids are presented in Table~\ref{tab:sporadic}.

\begin{table}[htb]
\begin{tabular}{| c | c | c | l |}
\hline
Name&Size&Rank&Elements\\
\hline
$\mathbf{S1}$ &11&5&$1,4,5,8,9,14,15,16,22,27,29$\\
$\mathbf{S2}$ &11&5&$1,3,4,8,9,14,15,16,22,27,29$\\
$\mathbf{S3}$ &12&5&$1,3,4,5,8,9,14,15,16,22,27,29$\\
$\mathbf{S4}$ &12&6&$1,2,4,8,15,16,32,42,44,49,56,63$\\
$\mathbf{S5}$ &13&5&$1,2,3,4,5,8,9,14,15,16,22,27,29$\\
\hline
\end{tabular}
\caption{The sporadic $3$\protect\dash connected prism-free binary matroids.}
\label{tab:sporadic}
\end{table}

By deleting or contracting at most two elements from
any of these sporadic matroids, we can obtain
one of the internally $4$\dash connected matroids listed
in Section~\ref{listsection}, as we now certify.
\begin{itemize}
\item Contracting~$16$ from {\bf S1} produces {\bf M5}.
\item Contracting~$16$ from {\bf S2} produces {\bf M6}.
\item Deleting~$3$ or $5$ from {\bf S3} produces
{\bf S1} or {\bf S2} respectively.
\item Contracting~$1$ from {\bf S4} produces
{\bf S1}.
\item Deleting $29$ from {\bf S5} produces
{\bf M20}.

\end{itemize}

\section{Applying a chain theorem}

We reduce the proof of Theorem~\ref{1} to a finite case check by using a chain theorem for internally $4$\dash connected binary matroids, but  before stating this theorem, we define the families of graphs appearing in it.

For $n \ge 3$, the \emph{planar quartic ladder} on $2n$ vertices
consists of two disjoint cycles
$$
\{u_{0}u_{1}, u_{1}u_{2}, \ldots, u_{n-2}u_{n-1}, u_{n-1}u_0\} \cup
\{v_{0}v_{1},v_{1}v_{2},\ldots, v_{n-2}v_{n-1}, v_{n-1}v_0\}$$
and two
perfect matchings
$$
\{u_{0}v_{0}, u_{1}v_{1}, \ldots, u_{n-1}v_{n-1}\} \cup
\{u_{0}v_{n-1}, u_{1}v_{0}, \ldots, u_{n-1}v_{n-2}\}.$$
Each planar quartic ladder contains all smaller planar quartic ladders as
minors, and the smallest planar quartic ladder is the 
\emph{octahedron} on~$6$ vertices. For $n \ge 3$, the \emph{M\"{o}bius quartic ladder} on $2n-1$ vertices
consists of a Hamilton cycle
$$\{v_{0}v_{1},v_{1}v_{2},\ldots, v_{2n-3}v_{2n-2},v_{2n-2}v_{0}\}$$
and the set of edges
$$\{v_{i}v_{i+n-1},v_{i}v_{i+n}\mid 0\leq i\leq n-1\},$$ where
subscripts are read modulo~$2n-1$. Each quartic M\"obius ladder contains all smaller quartic M\"obius ladders as minors, and the smallest quartic 
M\"obius ladder is the complete graph $K_5$ on~$5$ vertices. Figure~\ref{fig1} shows one member of each of these families.

Finally, the \emph{terrahawk} is obtained from the cube by adding a new
vertex, and making it adjacent to the four vertices in a
face of the cube. Figure~\ref{fig2} shows diagrams of the cube, the octahedron,
and the terrahawk. With these definitions in hand, we can state the chain theorem:
\newcommand{\ifourc}{internally $4$\dash connected }
\begin{theorem}[Chun, Mayhew, Oxley \cite{CMO}]
\label{2}
Let $M$ be an \ifourc binary matroid such that $|E(M)|\geq 7$.
Then $M$ has a proper \ifourc minor $N$ with $|E(M)|-|E(N)| \le 3$ unless $M$ or its dual is the cycle matroid of a planar quartic ladder or M\"obius quartic ladder, or a terrahawk. 
\end{theorem}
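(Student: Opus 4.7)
The plan is to prove the chain theorem by a careful analysis of single-element, pair, and triple reductions that preserve internal $4$-connectivity, with Seymour's splitter theorem as the starting tool. Fix an internally $4$-connected binary matroid $M$ with $|E(M)|\geq 7$, and assume for contradiction that no proper internally $4$-connected minor $N$ with $|E(M)|-|E(N)|\leq 3$ exists; the target is then to conclude that $M$ or $M^{*}$ is the cycle matroid of a planar quartic ladder, a M\"obius quartic ladder, or the terrahawk.

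First I would carry out the single-element step. Splitter-type results guarantee an element $e$ such that $M\backslash e$ or $M/e$ is $3$-connected; by the standing assumption such a reduction fails to be internally $4$-connected, hence admits a $3$-separation $(X,Y)$ with $\min\{|X|,|Y|\}\geq 4$. One then classifies, using the binary structure, how such a separation can be created by a single deletion or contraction from an internally $4$-connected matroid, typically in terms of triangles and triads of $M$ that meet $e$. This produces a finite menu of local ``obstruction configurations'' at $e$. In the two-element step, for each configuration one searches for a second element $f$, usually inside the smaller side of $(X,Y)$ or an attached triangle or triad, so that one of $M\backslash e\backslash f$, $M\backslash e/f$, $M/e\backslash f$, $M/e/f$ is internally $4$-connected. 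If every pair fails, a third reduction is sought within the structural residue, forcing the argument into a highly constrained local geometry.

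The final step is a structural synthesis. When every one-, two-, and three-element reduction fails, the accumulated local obstructions pin down the triangle/triad structure at every element of $M$; it should follow that either $M$ or $M^{*}$ is graphic and that the associated graph is $4$-regular with every edge lying on a prescribed number of triangles. A separate combinatorial classification then identifies these graphs as precisely the planar quartic ladders, the M\"obius quartic ladders, and the terrahawk, after which each exceptional matroid is verified directly to admit no internally $4$-connected minor within three elements. I expect the main obstacle to be the sheer combinatorial size of the two- and three-reduction case analysis: every obstruction must be tracked through successive reductions, low-rank base cases must be checked (likely with computer assistance), and the absence of any sporadic small-rank exception must be certified so that only the three named families survive.
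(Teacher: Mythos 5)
The paper does not prove Theorem~\ref{2}: it is imported verbatim from Chun, Mayhew and Oxley \cite{CMO}, so there is no internal proof to compare yours against. Judged on its own terms, your proposal is a plausible roadmap --- it does resemble the general shape of splitter-type arguments --- but it is not a proof. Every step that carries the actual mathematical weight is asserted rather than carried out: the ``finite menu of local obstruction configurations'' arising from a $3$-separation of $M\backslash e$ or $M/e$ is never exhibited; the claim that when all one-, two- and three-element reductions fail ``it should follow that either $M$ or $M^{*}$ is graphic'' is precisely the hard core of the theorem, and recovering graphicness of a binary matroid from local triangle/triad data is a substantial argument that nothing in your outline supplies; and the concluding ``separate combinatorial classification'' of the terminal graphs is invoked but not given.

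There is also a concrete error in your structural synthesis: you characterize the exceptional graphs as $4$-regular, but the terrahawk is not $4$-regular (the four cube vertices not adjacent to the apex retain degree $3$), so the classification step as you describe it would miss one of the three exceptional families. A correct write-up would require the full case analysis of how internal $4$-connectivity is destroyed by one, two and three removals, careful bookkeeping of the surviving configurations, and a verified identification of the terminal structures --- this is the content of an entire separate paper. As it stands, your proposal identifies a reasonable shape of argument but contains no checkable proof of the statement.
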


\begin{figure}[htb]
\includegraphics{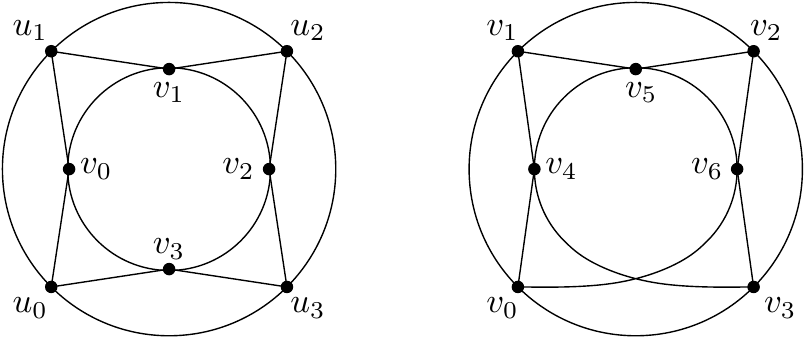}
\caption{A planar quartic ladder, and a M\"{o}bius quartic ladder.}
\label{fig1}
\end{figure}

\begin{figure}[htb]
\includegraphics{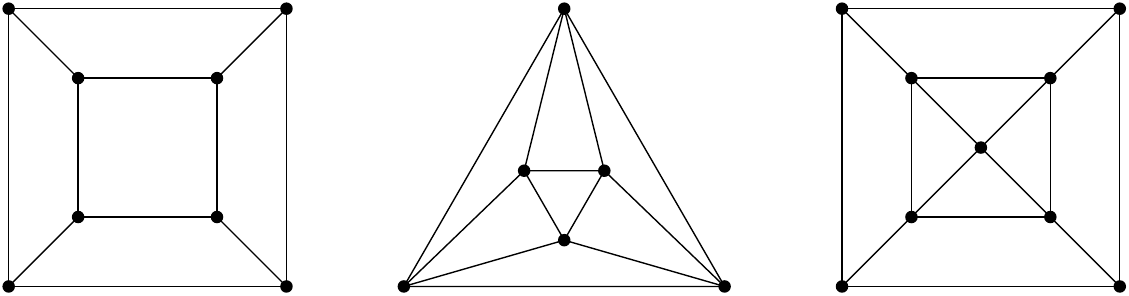}
\caption{The cube, the octahedron, and the terrahawk.}
\label{fig2}
\end{figure}

The next lemma shows that the exceptional cases in Theorem~\ref{2}
all have prism minors except for $M(K_5)$.

\begin{lemma}
\label{3}
The cycle and bond matroids of
(i) the terrahawk, (ii) the planar quartic ladders, and (iii) the M\"obius quartic ladders with at least~$7$ vertices, all have the triangular prism as a minor.
Moreover, the bond matroid of the M\"{o}bius quartic ladder
with~$5$ vertices has the triangular prism as a minor.
\end{lemma}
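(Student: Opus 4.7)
The plan is to convert each matroidal claim into an equivalent graph-minor claim. Because the triangular prism graph is planar with planar dual $K_{5}\ba e$, the prism matroid---the cycle matroid of the triangular prism---is equal to $M^{*}(K_{5}\ba e)$. Since graph minors induce cycle-matroid minors, and matroid duality commutes with taking matroid minors, the two matroidal conditions decouple cleanly: for any graph $G$, the cycle matroid $M(G)$ has the prism matroid as a minor if and only if $G$ has the triangular prism as a graph minor, and the bond matroid $M^{*}(G)$ has the prism matroid as a minor if and only if $G$ has $K_{5}\ba e$ as a graph minor. Thus the four matroidal assertions of the lemma are equivalent to four graph-minor statements.

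Next, I would invoke the monotonicity recorded earlier in the paper: each planar quartic ladder contains every smaller planar quartic ladder as a graph minor, and similarly for the M\"obius family. Since graph minors are transitive, it suffices to verify the graph-minor conclusions at the smallest members of these families, namely the octahedron and the $7$-vertex M\"obius quartic ladder $M_{7}$, together with the two individual graphs, the terrahawk and $K_{5}$.

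To produce the minors explicitly, I would use short contraction/deletion arguments. For the octahedron $K_{2,2,2}$, deleting any perfect matching leaves a $3$-regular graph consisting of two disjoint triangles joined by a perfect matching, i.e.\ the triangular prism; contracting a single edge and simplifying produces $K_{5}\ba e$. For the terrahawk, deleting the apex gives the cube, in which two suitably chosen edges (one from each of two opposite square faces) can be contracted to produce the triangular prism; a $K_{5}\ba e$-minor is obtained by contracting the apex along an incident edge and collapsing the opposite cube-face to a single vertex. For $M_{7}$, I would contract a chord such as $v_{0}v_{3}$ and then, in the resulting $6$-vertex graph, exhibit either the two triangles and a perfect matching of the triangular prism, or a single missing edge of $K_{5}\ba e$, via a short sequence of further contractions or deletions. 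Finally, for $K_{5}$ the statement is immediate since $K_{5}\ba e$ is a subgraph.

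The main obstacle is the $M_{7}$ case. Because $M_{7}$ is non-planar, planar duality---which is available for the other four graphs and which could otherwise trade a search for a triangular-prism-minor for a search for a $K_{5}\ba e$-minor in the dual---cannot be used. Both graph minors in $M_{7}$ must therefore be constructed by direct contraction and deletion; the individual verifications are routine, but finding choices of edges that make the resulting subgraph plainly a triangular prism or plainly $K_{5}\ba e$ after simplification is where the combinatorial bookkeeping of the lemma sits.
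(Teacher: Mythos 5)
Your proposal is correct and follows essentially the same route as the paper: both arguments reduce, via minor\dash monotonicity within each family and the identity $M(\mathrm{prism})=M^{*}(K_{5}\ba e)$, to exhibiting explicit prism and $K_{5}\ba e$ graph minors in the octahedron, the cube/terrahawk, the $7$\dash vertex M\"obius ladder, and $K_{5}$ (the paper handles the terrahawk and the planar\dash ladder bond matroids slightly more slickly via self\dash duality and planar duality, and disposes of the M\"obius prism case by citing a figure). Your suggested contraction of $v_{0}v_{3}$ in the $7$\dash vertex M\"obius ladder does work --- the resulting $6$\dash vertex graph contains two disjoint triangles joined by a perfect matching as a subgraph --- so the one step you left as ``routine'' indeed goes through.
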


\begin{proof}
The cycle matroid of the terrahawk, which is self-dual, clearly has a cube minor, which itself clearly has a triangular prism minor. The planar quartic ladders all have an octahedron minor, while their duals all have a cube minor, and both the octahedron and the cube have a triangular prism minor. 
A M\"obius quartic ladder on at least seven vertices contains
the M\"obius quartic ladder with seven vertices, which in
turn contains a triangular prism minor (see Figure~\ref{fig3}).
Since the smallest M\"obius quartic ladder is isomorphic to
$K_{5}$, the bond matroid of a M\"obius quartic ladder contains
$M^*(K_5)$, and hence $M^*(K_5 \ba e)$. 
\end{proof}

\begin{figure}[htb]
\includegraphics{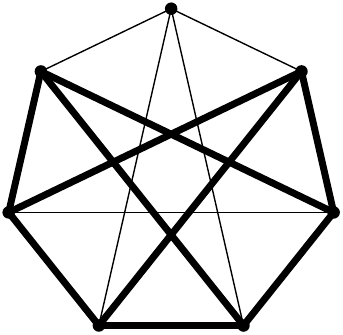}
\caption{The M\"{o}bius quartic ladder on seven vertices.}
\label{fig3}
\end{figure}

Assume that $M$ is a minimal counterexample to
Theorem~\ref{1}(i).
Then $M$ is internally $4$\dash connected, and
$|E(M)|\geq 7$, as otherwise $M$ is certainly a minor
of \cat{\AG{3,2}}{U_{1,1}}.
Note that $M(K_{5})$ is a minor of
\cat{\AG{3,2}}{U_{1,1}}, so $M\ncong M(K_{5})$.
This fact, and Lemma~\ref{3}, implies $M$ is not
the cycle or bond matroid of a quartic ladder, or of
the terrahawk.
Thus Theorem~\ref{2} says that
$M$ contains an internally
$4$\dash connected minor $N$ satisfying
$1\leq |E(M)|-|E(N)|\leq 3$.
The minimality of $M$ implies that $N$ is one
of the internally $4$\dash connected matroids
listed in Section~\ref{listsection}.
Therefore, if a counterexample exists, we can find it
by extending and coextending by at most three elements,
starting with the known 
internally $4$\dash connected matroids.
Next we show that 
the extensions and/or coextensions required to find $M$ starting from $N$
can be chosen to maintain $3$\dash connectivity at each step.

Let $n\geq 3$ be an integer.
Recall that the \emph{wheel} graph $\mathcal{W}_{n}$ is obtained
from a cycle with $n$ vertices by adding a new vertex and joining
it to each of the $n$ vertices in the cycle.

\begin{lemma}
\label{6}
Let $M$ be an internally $4$\dash connected binary matroid such
that $|E(M)|\geq 7$ and $M$ does not have a prism-minor.
If $M$ is not isomorphic to $M(K_{5})$, $M(K_{3,3})$ or
$M^{*}(K_{3,3})$, then there is a sequence $M_{0},\ldots, M_{t}$
of $3$\dash connected matroids such that:
\begin{enumerate}[label=(\roman*)]
\item $M_{0}$ is internally $4$\dash connected,
\item $M_{t}=M$,
\item $1\leq t\leq 3$, and
\item $M_{i+1}$ is a single-element extension or coextension
of $M_{i}$, for every $i\in\{0,\ldots, t-1\}$.
\end{enumerate}
\end{lemma}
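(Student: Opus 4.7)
Plan: The plan is to combine the chain theorem for internally $4$-connected binary matroids (Theorem~\ref{2}) with Seymour's Splitter Theorem~\cite{Sey80}.

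First, I check that Theorem~\ref{2} applies to $M$. Its exceptional cases are the cycle and bond matroids of the planar and M\"obius quartic ladders and of the terrahawk. By Lemma~\ref{3}, every such exceptional matroid apart from $M(K_{5})$ contains a prism minor. Since $M$ is prism-free and $M\not\cong M(K_{5})$ by hypothesis, none of these exceptions applies, so Theorem~\ref{2} supplies an internally $4$-connected proper minor $N$ of $M$ with $t := |E(M)|-|E(N)| \in \{1,2,3\}$; I set $M_{0} := N$.

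If $t = 1$ then taking $M_{1} := M$ completes the construction. For $t \in \{2,3\}$ I appeal to the Splitter Theorem, which says that if $N'$ is a $3$-connected proper minor of a $3$-connected matroid $M'$ with $|E(N')| \ge 4$, and $N'$ is neither a wheel nor a whirl, then $M'$ has a $3$-connected single-element minor that still contains $N'$. Iterated descent from $M$ down to $N$ then produces the required chain of $3$-connected matroids of length $t$. Whirls do not arise since $M$ is binary, and inspection of the list in Section~\ref{listsection} confirms that the only wheel among the internally $4$-connected prism-free binary matroids is $M(K_{4}) = M(\mathcal{W}_{3})$ (the wheels $M(\mathcal{W}_{r})$ for $r \ge 4$ admit vertical $3$-separations and hence fail internal $4$-connectedness). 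Consequently the Splitter Theorem can be iterated directly whenever $N \not\cong M(K_{4})$.

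The main obstacle is the residual case $N \cong M(K_{4})$, where the wheel exception in the Splitter Theorem blocks the iteration. Here $|E(N)| = 6$ and $t \le 3$, so $|E(M)| \in \{7, 8, 9\}$; the sub-case $|E(M)| = 7$ forces $t = 1$ and is already handled. For $|E(M)| \in \{8, 9\}$ the idea is to replace $N$ with an alternative internally $4$-connected minor $N'$ of $M$ satisfying $|E(M)|-|E(N')| \le 3$ and $N' \not\cong M(K_{4})$, and then apply the Splitter Theorem to the pair $(M, N')$ instead. The hypothesis that $M$ is not isomorphic to $M(K_{3,3})$ or $M^{*}(K_{3,3})$ is used precisely here: inspection of the list of internally $4$-connected prism-free binary matroids in Section~\ref{listsection} shows that, among candidates with $8$ or $9$ elements, $M(K_{3,3})$ and $M^{*}(K_{3,3})$ are the ones for which no such $N'$ exists (they are graphic and cographic respectively, so neither $F_{7}$ nor $F_{7}^{*}$ is available as a proper minor). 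For every other admissible $M$ a valid $N'$---for instance $F_{7}$, $F_{7}^{*}$, or a suitable matroid drawn from Tables~\ref{tab2} and~\ref{tab1}---can be located, and the Splitter Theorem then completes the chain.
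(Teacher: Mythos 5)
Your overall strategy (chain theorem plus Splitter Theorem, with the wheel case singled out for special treatment) matches the paper's, and the first two thirds of the argument are sound. The genuine gap is in the residual case $N\cong M(K_{4})$. There you propose to replace $N$ by an alternative internally $4$\dash connected minor $N'$ of $M$, and you justify the existence of $N'$ by ``inspection of the list of internally $4$\dash connected prism-free binary matroids in Section~\ref{listsection}''. This is circular: Lemma~\ref{6} is applied (in the proof of Lemma~\ref{4}) to a hypothetical minimal counterexample $M$, that is, to a matroid that is precisely \emph{not} yet known to appear on that list. At that stage one may only assume the list is complete for matroids with fewer elements than $M$; one cannot enumerate the possibilities for an $8$\dash\ or $9$\dash element $M$ and check that each admits a suitable $N'$. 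Moreover, you give no non-circular argument that a non-wheel internally $4$\dash connected minor within three elements of such an unknown $M$ exists at all --- Theorem~\ref{2} only guarantees \emph{some} internally $4$\dash connected minor within three elements, and in the bad case that minor is $M(K_{4})$. The parenthetical remark about $M(K_{3,3})$ and $M^{*}(K_{3,3})$ being graphic and cographic explains why those two fail, but establishes nothing about the other candidates.

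The repair is to invoke the Splitter Theorem at full strength: its wheel exception applies only when $M$ has a \emph{larger} wheel as a minor, a clause your paraphrase of the theorem drops. Thus if $N\cong M(K_{4})$ and $M$ has no $M(\mathcal{W}_{4})$\dash minor, the Splitter Theorem applies directly and you are done. If $M$ does have an $M(\mathcal{W}_{4})$\dash minor, then since $|E(M)|\leq 9$, $|E(\mathcal{W}_{4})|=8$, and $M(\mathcal{W}_{4})$ is not internally $4$\dash connected, $M$ must be a single-element extension or coextension of $M(\mathcal{W}_{4})$; a short computation with the binary representation of $M(\mathcal{W}_{4})$ (unique representability plus internal $4$\dash connectivity force the new column to be the all-ones vector) shows $M\cong M^{*}(K_{3,3})$ or $M\cong M(K_{3,3})$, contradicting the hypothesis. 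This is the paper's route, and it requires no appeal to the classification that Lemma~\ref{6} is being used to establish.
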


\begin{proof}
By Theorem~\ref{2} and Lemma~\ref{3}, $M$ contains an internally $4$\dash connected
minor $N$ such that $1\leq |E(M)|-|E(N)|\leq 3$.
We let $M_{0}$ be equal to $N$.
If $N$ is not a wheel, then the result follows immediately
from Seymour's Splitter Theorem~(see~\cite[Theorem~12.1.2]{Oxl11}).
Therefore we must assume that $N$ is isomorphic to
$M(\mathcal{W}_{n})$ for some $n$.
If $n \geq 4$, then $M(\mathcal{W}_{n})$ is not
internally $4$\dash connected, so
$N\cong M(\mathcal{W}_{3})\cong M(K_{4})$.
If $M$ does not have any larger wheel as a minor, then we can
again apply the Splitter Theorem and deduce that the
result holds.
Therefore we assume that $M$ contains $\mathcal{W}_{4}$ as a minor.
As $|E(\mathcal{W}_{4})|=8$, and $|E(M)| \leq |E(K_{4})|+3=9$,
and $M(\mathcal{W}_{4})$ is not internally $4$\dash connected,
we deduce that $M$ is a single-element extension or coextension
of $M(\mathcal{W}_{4})$.

Let us assume that
$M$ is an extension of $M(\mathcal{W}_{4})$.
Recall that $M(\mathcal{W}_{4})$ is represented over
$\mathrm{GF}(2)$ by the following matrix.
$$
\kbordermatrix{
&a&b&c&d&e&f&g&h\\
&1&0&0&0&1&0&0&1\\
&0&1&0&0&1&1&0&0\\
&0&0&1&0&0&1&1&0\\
&0&0&0&1&0&0&1&1\\
}
$$
The uniqueness of representations over $\mathrm{GF}(2)$
(see \cite[Proposition~6.6.5]{Oxl11})
means that a representation of $M$ is obtained
by adding a column to this matrix.
If this new column contains a zero in the first row, then
$\{a,e,h\}$ is a triad of $M$, and $\{a,b,e\}$ is a triangle.
This leads to a violation of internal $4$\dash connectivity.
Therefore the new column contains a one in the first row.
Repeating this argument shows that the new column must
contain ones everywhere.
Now it is easy to see that $M\cong M^{*}(K_{3,3})$,
contradicting the hypotheses of the lemma.
A dual argument shows that if $M$ is a coextension of
$M(\mathcal{W}_{4})$, then $M\cong M(K_{3,3})$, so we are done.
\end{proof}

\section{Computer searches}
\label{compsearch}

We use two independent computer searches to verify the following
lemmas, and thereby prove Theorem~\ref{1}. Section~\ref{macek} outlines
a technique using \textsc{Macek} to extend the known \ifourc prism-free binary matroids, while Section~\ref{orderly} describes an exhaustive search for all prism-free binary
matroids of rank up to 8. The results from both searches were in total agreement.

\begin{lemma}
\label{4}
Let $M$ be an internally $4$\dash connected
binary matroid with no prism-minor.
Then $M$ is one of the $42$ matroids described
in Section~{\rm \ref{listsection}}.
\end{lemma}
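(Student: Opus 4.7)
The plan is to argue by minimal counterexample, combining the chain theorem already in place with a finite computer-verified enumeration. Suppose $M$ is an internally $4$-connected prism-free binary matroid of minimum size that is not among the $42$ matroids listed in Section~\ref{listsection}. Since that section explicitly includes every internally $4$-connected binary matroid on at most $6$ elements, we may assume $|E(M)|\geq 7$, which puts us in a position to apply Theorem~\ref{2}. Moreover, $M(K_{5})$, $M(K_{3,3})$, and $M^{*}(K_{3,3})$ all appear in the list, so $M$ is isomorphic to none of them and the hypotheses of Lemma~\ref{6} are satisfied.

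Lemma~\ref{6} then produces a sequence $M_{0},M_{1},\ldots,M_{t}=M$ of $3$-connected binary matroids with $1\leq t\leq 3$, in which $M_{0}$ is internally $4$-connected and each $M_{i+1}$ is a single-element extension or coextension of $M_{i}$. Since $M_{0}$ is a minor of the prism-free matroid $M$ it is itself prism-free, and since $|E(M_{0})|<|E(M)|$ the minimality assumption forces $M_{0}$ to be one of the $42$ matroids in the list. Consequently, every potential counterexample must arise from some known matroid by a chain of at most three $3$-connected single-element extensions or coextensions, every term of which avoids a prism-minor.

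This reduces the argument to a finite verification, which is carried out in Section~\ref{compsearch}: for each of the $42$ matroids, enumerate all $3$-connected binary single-element extensions and coextensions, discard those introducing a prism-minor, iterate to depth at most three, and confirm that every internally $4$-connected matroid produced is already on the list. The main obstacle will not be conceptual but one of confidence: the enumeration is substantial, isomorphism testing on binary matroids at this scale is delicate, and a single implementation error could easily hide a genuine counterexample. The strategy for dealing with this is to perform two entirely independent searches---one using Hlin\v{e}n\'{y}'s \textsc{Macek} package to drive the extension enumeration directly from the known internally $4$-connected matroids, and one constructing from scratch the full catalogue of simple prism-free binary matroids of rank up to~$8$ and then checking which of these are internally $4$-connected---so that exact agreement between the two outputs provides the required certainty.
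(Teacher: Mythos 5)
Your proposal is correct and follows essentially the same route as the paper: a minimal counterexample is shown to have at least $7$ elements, Lemma~\ref{6} supplies a chain of at most three $3$\dash connected single-element extensions/coextensions from a listed internally $4$\dash connected minor $M_{0}$, and the resulting finite search is delegated to the two independent computations of Section~\ref{compsearch}. The only difference is cosmetic: the paper additionally chooses $t$ minimal so that the \textsc{Macek} search can forbid listed matroids one element larger than $M_{0}$, a pruning optimization that your (slightly larger but still finite) search does not need for correctness.
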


\begin{lemma}
\label{5}
Let $M$ be a binary matroid with no prism-minor.
If $M$ is $3$\dash connected but not internally $4$\dash connected,
and $M$ contains an internally $4$\dash connected minor with at least~$6$ elements that is not isomorphic to $M(K_{4})$, $F_{7}$,
$F_{7}^{*}$, or $M(K_{3,3})$, then $M$ is one of the
$5$ sporadic matroids
$\mathbf{S1}$, $\mathbf{S2}$, $\mathbf{S3}$, $\mathbf{S4}$,
or $\mathbf{S5}$.
\end{lemma}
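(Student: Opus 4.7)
The plan is to reduce Lemma~\ref{5} to a finite computer verification, in parallel with the proof of Lemma~\ref{4}. By hypothesis, $M$ contains an internally $4$-connected minor $N$ with $|E(N)|\geq 6$ and $N\not\cong M(K_4), F_7, F_7^*, M(K_{3,3})$. Lemma~\ref{4} places $N$ in the explicit list of Section~\ref{listsection}; after imposing the size condition and ruling out the four named matroids, this leaves exactly $32$ candidates for $N$, namely $\mathbf{M4},\ldots,\mathbf{M13}$ of rank~$4$ and $\mathbf{M15},\ldots,\mathbf{M36}$ of rank~$5$.

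The next step is to invoke Seymour's Splitter Theorem to construct a chain $N=M_0,M_1,\ldots,M_t=M$ of $3$-connected binary matroids, each obtained from the previous by a single-element extension or coextension. The theorem applies because every allowed $N$ has at least six elements and is internally $4$-connected, so $N$ is neither a wheel nor a whirl---the only binary wheel that is internally $4$-connected is $M(\mathcal{W}_3)=M(K_4)$, which is excluded, and whirls are not binary. Each intermediate $M_i$ is $3$-connected and, being a minor of $M$, is prism-free.

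The critical reduction is to bound $t$. Take $N$ to be a \emph{maximal} internally $4$-connected minor of $M$ satisfying the hypothesis; then no intermediate $M_i$ with $0<i<t$ can both be internally $4$-connected and satisfy the hypothesis, on pain of contradicting maximality. Combining this observation with the chain theorem (Theorem~\ref{2}), which constrains how far an internally $4$-connected binary matroid can be from its internally $4$-connected proper minors, one deduces $t\leq 3$, and therefore $r(M)\leq r(N)+3\leq 8$. This is exactly the rank bound used by the exhaustive enumeration described in Section~\ref{compsearch}.

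It then remains to verify the following finite claim by computer: for each of the $32$ candidate starting matroids $N$, every $3$-connected prism-free binary matroid obtained from $N$ by at most three single-element extensions or coextensions, and that fails to be internally $4$-connected, is isomorphic to one of $\mathbf{S1},\ldots,\mathbf{S5}$. The main obstacle is the sheer combinatorial volume of the search together with the need for reliable isomorphism testing of the many candidate matroids produced; by-hand verification is infeasible. The authors address this by running two logically independent searches---the \textsc{Macek}-based extension engine and the exhaustive enumeration of all simple prism-free binary matroids of rank up to~$8$---and verifying that their outputs agree exactly, after which the lemma follows.
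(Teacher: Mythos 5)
Your overall strategy---reduce to a finite computer search by building $M$ up from a listed internally $4$\dash connected minor $N$ via a Splitter\dash Theorem chain of $3$\dash connected single-element extensions and coextensions---is the same as the paper's, and several of the individual steps are fine: the count of $32$ admissible starting matroids, the observation that the wheel/whirl exceptions to the Splitter Theorem do not arise because $M(K_4)$ is excluded, and the rank bound $r(M)\leq 8$ \emph{once} $t\leq 3$ is known. The genuine gap is your justification of $t\leq 3$. Theorem~\ref{2} applies only to matroids that are themselves internally $4$\dash connected: it bounds the distance from an internally $4$\dash connected matroid down to a smaller internally $4$\dash connected minor. In Lemma~\ref{5} the matroid $M$ is expressly \emph{not} internally $4$\dash connected, and neither is any intermediate $M_i$ with $0<i<t$ (your maximality observation rules that out, since such an $M_i$ would have at least $10$ elements and hence satisfy the hypothesis), so Theorem~\ref{2} says nothing about how far $M$ can be from $N$. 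Maximality of $N$ by itself gives no bound on $t$: a priori, a $3$\dash connected prism-free binary matroid could be arbitrarily far from its largest internally $4$\dash connected minor.

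The paper closes this gap with a minimal-counterexample induction on $|E(M)|$ combined with a concrete certification about the sporadic matroids. Choosing $M$ to be a smallest counterexample and $M_0$ so that $t$ is minimal, the penultimate matroid $M_{t-1}$ either is internally $4$\dash connected (whence $t=1$ by minimality of $t$) or, being a smaller matroid satisfying all the hypotheses, is by induction one of $\mathbf{S1},\ldots,\mathbf{S5}$. Section~\ref{sporadic} certifies that each sporadic matroid has an internally $4$\dash connected minor, not isomorphic to $M(K_4)$, $F_7$, $F_7^*$, or $M(K_{3,3})$, that is at most two elements smaller; this yields an internally $4$\dash connected minor of $M$ within three elements, and the minimality of $t$ then forces $t\leq 3$. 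Without an argument of this kind (or some other a priori bound), your reduction to a depth-$3$ search is not justified, and the rest of the proposal does not go through.
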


\subsection{Using MACEK}\label{macek}

It is straightforward to use \textsc{Macek} to find all
$3$\dash connected binary matroids $M$ satisfying the
following properties:
\begin{enumerate}[label=(\roman*)]
\item $M$ does not have a prism-minor;
\item there is a sequence $M_{0},\ldots, M_{t}$ of
$3$\dash connected matroids where
\begin{enumerate}[label=(\alph*)]
\item $M_{0}$ has at least~$6$ elements, and is one of the
internally $4$\dash connected matroids listed in Section~\ref{listsection},
\item $M_{t}=M$,
\item $1\leq t\leq 3$,
\item $M_{i+1}$ is a single-element extension or
coextension of $M_{i}$, for all $i\in\{0,\ldots, t-1\}$;
\end{enumerate}
\item\label{item} $M$ does not contain a minor $N$ where
$|E(N)| = |E(M_{0})|+1$, and $N$ is isomorphic to one of the
internally $4$\dash connected matroids listed
in Section~\ref{listsection}.
\end{enumerate}

For example, if $M_{0}$ is {\bf M4}, then we would use the
command
\begin{verbatim}
./macek -pGF2 `@ext-forbid Prism M5 M6 M15 M16;!extend bbb' M4
\end{verbatim}
to find $M$,
since {\bf M5}, {\bf M6}, {\bf M15}, and
{\bf M16} are the matroids in Section~\ref{listsection}
that are a single element larger than {\bf M4}.

Assume that $M$ is a counterexample
to Lemma~\ref{4}, chosen so that $|E(M)|$ is as small
as possible.
Then $M$ certainly has an
$M(K_{4})$\dash minor~\cite[Corollary~12.2.13]{Oxl11}, but
$M$ is not isomorphic to $M(K_{4})$.
Therefore $|E(M)| \geq 7$.
Since $M(K_{5})$, $M(K_{3,3})$, and $M^{*}(K_{3,3})$
all appear in Section~\ref{listsection}, $M$ is isomorphic
to none of these matroids.
We consider the sequence of matroids 
$M_{0},\ldots, M_{t}$ supplied by Lemma~\ref{6}.
As there are no internally $4$\dash connected binary matroids
with four or five elements, it is certainly the case
that $|E(M_{0})| \geq 6$.
Moreover, $M_{0}$ is listed in Section~\ref{listsection},
by the minimality of $M$.
We assume that $M$ and $M_{0}$ have been chosen so that
$t$ is as small as possible.
This means that $M$ cannot contain a minor $N$ such that
$|E(N)|=|E(M_{0})|+1$ and $N$ is listed in
Section~\ref{listsection}, or else we would have chosen $M_{0}$
to be $N$ instead.
Thus condition~\ref{item} in the list above holds.
Therefore $M$ will be found in the \textsc{Macek} search we described at the
beginning of this section.
But the \textsc{Macek} search uncovers no internally $4$\dash connected matroids.
We conclude that Lemma~\ref{4} holds.

Now assume that $M$ is a minimal counterexample to Lemma~\ref{5}.
Then $M$ contains an internally $4$\dash connected minor
$M_{0}$ such that $|E(M_{0})| \geq 6$ and $M_{0}$ is not isomorphic
to $M(K_{4})$, $F_{7}$, $F_{7}^{*}$, or $M(K_{3,3})$.
Lemma~\ref{4} implies that $M_{0}$ is one of the matroids
listed in Section~\ref{listsection}.
By Seymour's Splitter Theorem, there is a sequence
of $3$\dash connected matroids $M_{0},\ldots, M_{t}$
such that $M_{t}=M$, and each matroid in the sequence
is a single-element extension or coextension of the previous
matroid.
We assume that $M$ and $M_{0}$ have been chosen so that
$|E(M)|-|E(M_{0})|=t$ is as small as possible.

Suppose that $M$ contains as a minor
an internally $4$\dash connected matroid with exactly
one more element than $M_{0}$.
By the minimality of $t$, this internally $4$\dash connected
matroid must be $M(K_{4})$, $F_{7}$, $F_{7}^{*}$, or $M(K_{3,3})$.
This implies $M_{0}$ has at most $8$ elements, and is therefore
isomorphic to $M(K_{4})$, $F_{7}$, or $F_{7}^{*}$.
This contradiction shows that $M$ and $M_{0}$ obey
condition~\ref{item}, described above.

Certainly $t\geq 1$.
If $M_{t-1}$ is internally $4$\dash connected, then
$t=1$.
Assume that $M_{t-1}$ is not internally $4$\dash connected.
By the minimality of $t$, $M_{t-1}$ is not a counterexample to
Lemma~\ref{5}, so
it must be one of the sporadic matroids.
Every such matroid has an internally $4$\dash connected minor
that is at most two elements smaller.
Moreover, none of these internally $4$\dash connected
matroids is isomorphic to
$M(K_{4})$, $F_{7}$, $F_{7}^{*}$, or $M(K_{3,3})$.
It follows from this that $M_{0}=M_{t-3}$ or $M_{0}=M_{t-2}$,
and therefore $t\leq 3$.
Therefore $M$ will be uncovered by the \textsc{Macek} procedure we described
earlier.
However, when we apply the \textsc{Macek} search procedure to the
internally $4$\dash connected matroids in
Section~\ref{listsection} other than
$M(K_{4})$, $F_{7}$, $F_{7}^{*}$, or $M(K_{3,3})$, we produce no
$3$\dash connected matroids other than the sporadic matroids.
We conclude that Lemma~\ref{5} is true.

\subsection{An exhaustive search process}\label{orderly}

In this section we describe the exhaustive search process that was used to compute the list of simple binary matroids with no
$M^{*}(K_{5}\ba e)$\dash minor of rank up to 8.
This is accomplished by mapping the problem into a graph-theoretic context and then using an orderly algorithm designed for graphs to perform the computations.

First, note that a simple binary matroid of rank at most $r$ can be identified with a set of points in the projective space
$\mathrm{PG}(r-1,2)$. Given the matroid $M$, we can take the columns of an arbitrary representing matrix, which are non-zero as $M$ is simple, to be vectors in $\mathrm{GF}(2)^r$ (padding them with zeros if $M$ has rank less than $r$). Conversely any set of points in $\mathrm{PG}(r-1,2)$ determines a matroid of rank at most $r$ simply by taking the unique non-zero vector representing each point to be the columns of a matrix. The importance of this identification is that the unique representability of binary matroids ensures that the natural concepts of equivalence in each context coincide. In particular, two simple binary matroids are isomorphic if and only if the two corresponding subsets of $\mathrm{PG}(r-1,2)$ are equivalent under the automorphism group of the projective space, which is the projective general linear group $\mathrm{PGL}(r,2)$. 

\begin{lemma}
Let $X$ and $Y$ denote sets of points in the projective space $\mathrm{PG}(r-1,2)$. Then the matroids determined by $X$ and $Y$ are isomorphic if and only if there is an element of $\mathrm{PGL}(r,2)$ mapping $X$ to $Y$. \qed
\end{lemma}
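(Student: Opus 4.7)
The plan is to reduce the statement to the Brylawski--Lucas theorem on the unique representability of binary matroids (cited earlier in the excerpt as Oxley, Proposition~6.6.5), together with the elementary observation that $\mathrm{PGL}(r,2) = \mathrm{GL}(r,2)$: over $\mathrm{GF}(2)$ the only nonzero scalar is $1$, so each point of $\mathrm{PG}(r-1,2)$ corresponds to a unique nonzero vector of $\mathrm{GF}(2)^{r}$, and an element of $\mathrm{PGL}(r,2)$ is simply an invertible linear transformation of $\mathrm{GF}(2)^{r}$.

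The ``if'' direction is immediate. Any $\phi\in\mathrm{GL}(r,2)$ sends nonzero vectors to nonzero vectors and preserves all linear dependencies, so if $\phi(X)=Y$ then the bijection $X\to Y$ induced by $\phi$ preserves the family of linearly independent subsets, which is exactly the matroid structure; hence $M(X)\cong M(Y)$.

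For the ``only if'' direction, suppose $\sigma\colon M(X)\to M(Y)$ is a matroid isomorphism, with $|X|=|Y|=n$. Form the $r\times n$ matrices $A_{X}$ and $A_{Y}^{\sigma}$ whose $i$th columns are the vectors representing the $i$th element of $X$ and its $\sigma$-image in $Y$, respectively. Then $A_{X}$ and $A_{Y}^{\sigma}$ are two $\mathrm{GF}(2)$-representations of the same matroid on the same ordered ground set. By the Brylawski--Lucas theorem, one is obtained from the other by elementary row operations together with the insertion or deletion of zero rows. Since both matrices already have exactly $r$ rows, in the case where the common matroid rank equals $r$ there exists an invertible matrix $P\in\mathrm{GL}(r,2)$ with $A_{Y}^{\sigma}=P\,A_{X}$, and viewing $P$ as an element of $\mathrm{PGL}(r,2)$ gives the desired map sending $X$ onto $Y$.

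The only point needing brief care, and the main (mild) obstacle, is the rank-deficient case in which the common matroid rank $r'$ is strictly less than $r$. Here $X$ and $Y$ may span distinct $r'$-dimensional subspaces $V_{X},V_{Y}\subseteq\mathrm{GF}(2)^{r}$. After row-reducing each matrix to isolate an $r'\times n$ full-row-rank block, Brylawski--Lucas yields an invertible element of $\mathrm{GL}(r',2)$ realising a linear isomorphism $V_{X}\to V_{Y}$ that carries $X$ bijectively onto $Y$. This isomorphism extends freely to an element of $\mathrm{GL}(r,2)$ by selecting any linear isomorphism between chosen complements of $V_{X}$ and $V_{Y}$ in $\mathrm{GF}(2)^{r}$; the resulting matrix is the required element of $\mathrm{PGL}(r,2)$, completing the proof.
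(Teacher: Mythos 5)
Your proof is correct and follows exactly the route the paper intends: the paper states this lemma with no proof, justifying it in the preceding paragraph by appeal to the unique representability of binary matroids over $\mathrm{GF}(2)$, which is precisely the Brylawski--Lucas argument you spell out (including the routine extension from the span of $X$ to all of $\mathrm{GF}(2)^{r}$ in the rank-deficient case).
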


To express this in a graph-theoretic context, we need to work with a graph whose automorphism group is $\mathrm{PGL}(r,2)$. So let $\Gamma_r$ denote the point-hyperplane incidence graph of $\mathrm{PG}(r-1,2)$; this is a bipartite graph with $2 (2^r-1)$ vertices of which $2^r-1$ are ``point-type'' vertices and $2^r-1$ are ``hyperplane-type'' vertices. The automorphism group of this graph is $\mathrm{PGL}(r,2) \times 2$  where the extra factor of 2 arises from an automorphism that exchanges points with hyperplanes. Let ${\mathcal P}_r$ denote the ``point-type'' vertices of $\Gamma_r$. Thus any simple binary matroid $M$ of rank at most $r$ can be identified with a subset ${\mathcal P}_r(M)$ of ${\mathcal P}_r$, and the automorphism group of $\Gamma_r$ fixing ${\mathcal P}_r(M)$ is the automorphism group of $M$.

Brendan McKay's graph isomorphism and canonical labelling program \verb+nauty+ can find the automorphism group and canonical labelling of a graph with a given set of vertices distinguished (i.e., a coloured graph). Therefore two simple binary matroids $M$ and $N$ of rank at most $r$ are isomorphic if and only if $|M|=|N|$ and the canonically labelled isomorph of $\Gamma_r$ with ${\mathcal P}_r(M)$ distinguished is identical to the canonically labelled isomorph of $\Gamma_r$ with ${\mathcal P}_r(N)$ distinguished.

\subsubsection{An orderly algorithm}
Suppose first that our task is to compute {\em all} the simple binary matroids up to some fixed rank $r$ ---  a simple modification to this basic algorithm will permit the computation of simple binary matroids excluding any particular matroid or set of matroids. 

As in the previous section, let $\Gamma_r$ be the bipartite point-hyperplane graph of $\mathrm{PG}(r-1,2)$, let ${\mathcal P}_r$ denote the points of $\mathrm{PG}(r-1,2)$ and let $G = \mathrm{PGL}(r,2)$ be the subgroup of $\mathrm{Aut}(\Gamma_{r})$ that fixes ${\mathcal P}_r$. 
Then our aim is the following:
\begin{quote}
{\em Compute one representative of each $G$\dash orbit on subsets of $\mathcal{P}_r$.}
\end{quote}

Let ${\mathcal L}_k$ be a set containing one representative from each $G$\dash orbit on $k$\dash subsets of the points. Then the algorithm below shows how to compute ${\mathcal L}_{k+1}$ from ${\mathcal L}_k$ with no explicit isomorphism tests between pairs of $(k+1)$\dash subsets.

\begin{framed}
\renewcommand{\labelitemi}{$\bullet$}
\renewcommand{\labelitemii}{$\diamond$}
\noindent{\sc Algorithm 1}\\\\
For each $k$\dash subset $X \in {\mathcal L}_k$ 
\begin{itemize}
\item Compute the group $G_X$ fixing $X$ setwise
\item For each orbit representative $x$ of $G_X$ on $\mathcal{P}_r \backslash X$
\begin{itemize}
\item Let $Y = X \cup \{x\}$
\item Compute the group $G_Y$ and the corresponding canonically labelled graph
\item Add $Y$ to $\mathcal{L}_{k+1}$ if and only if $x$ is in the same $G_Y$\dash orbit
as the lowest canonically labelled vertex of $Y$.
\end{itemize}
\end{itemize}
\end{framed}
\begin{center}Algorithm 1: An orderly progression from ${\mathcal L}_k$ to ${\mathcal L_{k+1}}$
\end{center}

\begin{theorem}
With the notation above, if ${\mathcal L}_k$ contains exactly one representative of each $G$\dash orbit on $k$\dash subsets of ${\mathcal P}_r$, then the set ${\mathcal L}_{k+1}$ produced by  Algorithm~$1$ contains exactly one representative of each $G$\dash orbit on $(k+1)$\dash subsets of ${\mathcal P}_r$.
\end{theorem}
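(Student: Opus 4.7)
The plan is to establish the two halves of the claim separately: \emph{existence} (every $G$\dash orbit of $(k+1)$\dash subsets has at least one representative in $\mathcal{L}_{k+1}$) and \emph{uniqueness} (no two accepted sets lie in the same $G$\dash orbit). The key background fact, used in both halves, is that the canonical labelling is an invariant of the colour-preserving isomorphism class: if $\alpha\in G$ satisfies $\alpha Y = Y'$, then $\alpha$ carries the $G_Y$\dash orbit of the lowest canonically labelled vertex of $Y$ onto the $G_{Y'}$\dash orbit of the lowest canonically labelled vertex of $Y'$.

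For existence, I would start with an arbitrary $(k+1)$\dash subset $Z\subseteq\mathcal{P}_r$. Let $z^{*}$ be a lowest canonically labelled vertex of $Z$, set $X^{*}=Z\setminus\{z^{*}\}$, and invoke the hypothesis on $\mathcal{L}_k$ to produce $g\in G$ and $X\in\mathcal{L}_k$ with $gX^{*}=X$. Then $gZ = X\cup\{gz^{*}\}$. The point $gz^{*}\in\mathcal{P}_r\setminus X$ lies in some $G_X$\dash orbit whose chosen representative $x$ appears in the inner loop of Algorithm~1; pick $h\in G_X$ with $hgz^{*}=x$ and let $Y=hgZ=X\cup\{x\}$. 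By the equivariance fact, since $z^{*}$ is a lowest canonically labelled vertex of $Z$ and $hg\in G$ maps $Z$ to $Y$ with $hgz^{*}=x$, the point $x$ lies in the $G_Y$\dash orbit of a lowest canonically labelled vertex of $Y$. Thus the acceptance test in Algorithm~1 succeeds at the pair $(X,x)$, and $Y\in\mathcal{L}_{k+1}$ is a representative of the $G$\dash orbit of $Z$.

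For uniqueness, I would suppose that the algorithm accepts $Y_1=X_1\cup\{x_1\}$ and $Y_2=X_2\cup\{x_2\}$ with $gY_1=Y_2$ for some $g\in G$. Both $x_i$ lie in the $G_{Y_i}$\dash orbit of a lowest canonically labelled vertex of $Y_i$ by the acceptance criterion. Equivariance applied to $g$ shows that $gx_1$ lies in the $G_{Y_2}$\dash orbit of such a vertex, and hence in the same $G_{Y_2}$\dash orbit as $x_2$; so there is $h\in G_{Y_2}$ with $hgx_1=x_2$. Then $(hg)Y_1=Y_2$ and $(hg)x_1=x_2$ forces $(hg)X_1=X_2$, so $X_1$ and $X_2$ lie in the same $G$\dash orbit on $k$\dash subsets; the inductive uniqueness of $\mathcal{L}_k$ forces $X_1=X_2=:X$. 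Now $hg\in G_X$ and $(hg)x_1=x_2$, so $x_1$ and $x_2$ lie in the same $G_X$\dash orbit on $\mathcal{P}_r\setminus X$; since the inner loop iterates over orbit representatives once each, $x_1=x_2$ and therefore $Y_1=Y_2$. The main obstacle is not the combinatorial bookkeeping — that is routine — but rather nailing down the equivariance property of the canonical labelling in a form usable here; once one formulates it as ``$x$ lies in the $G_Y$\dash orbit of a lowest canonically labelled vertex of $Y$ is a property of the $G$\dash orbit of the pair $(Y,x)$'', the rest of the argument is essentially a diagram chase of $G$\dash actions.
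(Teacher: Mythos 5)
Your proof is correct and follows essentially the same strategy as the paper's: existence by deleting the lowest canonically labelled vertex and appealing to the inductive hypothesis on $\mathcal{L}_k$, and uniqueness by using the equivariance of the canonical labelling to force first $X_1=X_2$ and then $x_1=x_2$. You merely make explicit the group elements and the equivariance property that the paper uses implicitly.
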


\begin{proof}
Suppose that $Y$ is a $(k+1)$\dash subset of ${\mathcal P}_r$. We need to show that ${\mathcal L}_{k+1}$ contains exactly one isomorph of $Y$. Consider the canonically labelled version of $\Gamma_r$ with $Y$ distinguished, let $y$ be the element of $Y$ with the lowest canonical label and set $X = Y \backslash \{y\}$. Then by the
inductive hypothesis, ${\mathcal L}_k$ contains some isomorph of $X$. When this
isomorph is processed by Algorithm 1, all of its single-element extensions will be considered including an isomorph of $Y$ which will then be accepted. Hence an isomorph of $Y$ is accepted at least once. An isomorph of $Y$ can be accepted only as an extension of an isomorph of $X$, and hence different $k$\dash subsets cannot yield isomorphic $(k+1)$\dash subsets. Finally note that if both $X \cup x_1$ and $X \cup x_2$ are accepted as extensions of $X$, then they cannot be isomorphic. If they were isomorphic then some automorphism would map $x_1$ to $x_2$  (as they are both in the orbit of the lowest canonically labelled vertex), and hence some automorphism fixing $X$ would map $x_1$ to $x_2$, contradicting the fact that only one representative of each orbit of $G_X$ is considered for addition to $X$ as it is processed. \end{proof}

\subsubsection{Modification of this algorithm}
This algorithm is easily modified to produce only those matroids that exclude a given minor, say $M$. (Henceforth we will identify a matroid with the corresponding $k$\dash subset of ${\mathcal P}_r$ and mix the graph or matroid terminology interchangeably, even in a single sentence!)

If $M$ has $k$ elements, then the algorithm is run normally until the set ${\mathcal L}_k$ has been produced. One of these $k$\dash sets is equivalent to $M$ and we then set   ${\mathcal L}'_k = {\mathcal L}_k \ba M$ which, by definition, contains every $k$\dash element simple binary matroid of rank up to $r$ with no minor isomorphic to $M$. If Algorithm 1 is then applied to ${\mathcal L}'_k$ then the resulting list will certainly contain every $(k+1)$\dash element matroid without an $M$\dash minor, but probably will introduce some new matroids that {\em do} contain an $M$\dash minor. Therefore, a two-step process is used to produce ${\mathcal L}'_{k+1}$ from ${\mathcal L}'_k$:

\begin{enumerate}
\item Use Algorithm 1 to compute ${\mathcal L}'_k \rightarrow {\mathcal M}_{k+1}$
\item Form ${\mathcal L}'_{k+1}$  by removing any matroids with an $M$\dash minor from ${\mathcal M}_{k+1}$.
\end{enumerate}

The second stage of this process would be prohibitively expensive to perform if each matroid had to be directly tested for the presence of a minor isomorphic to $M$. However we can exploit the fact that the lists ${\mathcal L}'_\ell$ for $\ell \leq k$ jointly contain all smaller simple binary matroids with no minor isomorphic to $M$. As each candidate matroid is created, we test that (the simplifications) of all of its single-element deletions and contractions are contained in these lists, thereby certifying that the matroid is $M$\dash free.

Once the list of all simple prism-free binary matroids of rank up to 8 has been found, a straightforward calculation verifies that there are no \ifourc prism-free binary matroids of rank 6, 7 or 8 and hence Lemma~\ref{4} holds. A slightly more elaborate computation determining the minor order on the prism-free binary matroids confirms that Lemma~\ref{5} is also true.


\end{document}